\newtheorem{theorem}{Theorem}[section]
\newtheorem{proposition}[theorem]{Proposition}
\newtheorem{corollary}[theorem]{Corollary}
\newtheorem{lemma}[theorem]{Lemma}
\theoremstyle{remark}
\newtheorem{remark}[theorem]{{\bf Remark}}
\numberwithin{equation}{section}
\newcommand{\R}{{\mathbb R}}
\newcommand{\eps}{{\varepsilon}}
\newcommand{\vark}{{\varkappa}}
\renewcommand\a{\alpha}
\renewcommand\b{\beta}
\newcommand\g{\gamma}
\renewcommand\d{\delta}
\renewcommand\l{\lambda}
\newcommand\n{\nabla}
\renewcommand\o{\omega}\renewcommand\O{\Omega}
\def\sig{\sigma}
\def\vark{\varkappa}
\def\part{\partial}
\def\XXint#1#2#3{{\setbox0=\hbox{$#1{#2#3}{\int}$}
\vcenter{\hbox{$#2#3$}}\kern-.5\wd0}}
\def\W{\rlap{$\buildrel \circ \over W$}\phantom{W}}
\begin{document}

\title{
Estimates of  solutions for the parabolic $p$-Laplacian equation with measure
via parabolic nonlinear potentials }

\author{
{\large Vitali Liskevich}\\
\small Department of Mathematics\\
\small Swansea University\\
\small Swansea SA2 8PP, UK\\
{\small \tt v.a.liskevich@swan.ac.uk}\\
\and
{\large Igor I.\,Skrypnik}\\
\small Institute of Applied\\
\small Mathematics and Mechanics\\
\small Donetsk 83114,  Ukraine\\
{\small \tt iskrypnik@iamm.donbass.com} \and
{\large Zeev Sobol}\\
\small Department of Mathematics\\
\small  Swansea University\\
\small Swansea SA2 8PP, UK\\
{\small \tt z.sobol@swan.ac.uk} }

\date{}

\maketitle

\setlength{\unitlength}{0.0004in}
\begingroup\makeatletter\ifx\SetFigFont\undefined%
\gdef\SetFigFont#1#2#3#4#5{%
  \reset@font\fontsize{#1}{#2pt}%
  \fontfamily{#3}\fontseries{#4}\fontshape{#5}%
  \selectfont}%
\fi\endgroup%
\renewcommand{\dashlinestretch}{30}

\begin{abstract}
For 
weak solutions to the evolutional $p$-Laplace equation with a
time-dependent Radon measure on the right hand side we
obtain pointwise estimates via a nonlinear parabolic potential.
%
%
\end{abstract}

\bigskip

\section{Introduction and main results}
In this note we give a parabolic extension of a by now
classical result by Kilpel\"ainen-Mal\'y estimates~\cite{KiMa},
who proved pointwise estimates for solutions to quasi-linear
$p$-Laplace type elliptic equations with measure in the right
hand side, in terms of the (truncated) non-linear Wolff
potential $W^\mu_{\b,p} (x,R)$ of the measure,
\begin{equation}
\label{wolff}
W^\mu_{\b,p} (x,\rho)=\sum\limits_{j=0}^\infty
\left(\frac{\mu(B_{\rho_j}(x))}{\rho_j^{N-\b p}}\right)^\frac1{p-1},~ \rho_j:=2^{-j}\rho,~j=0,1,2,\ldots
\end{equation}
These estimates were subsequently extended to fully nonlinear
equations by Labutin \cite{Labutin} and fully nonlinear and
subelliptic quasi-linear equations by Trudinger and Wang
\cite{TW}. The pointwise estimates proved to be extremely
useful in various regularity and solvability problems for
quasilinear and fully nonlinear equations
\cite{KiMa,Labutin,PV1,PV2,TW}. For the parabolic equations the
corresponding result was recently given in \cite{DM2,DM3} for
the case $p=2$, and by the authors in \cite{LSS11} for
the case
$p>2$ and the measure on the right hand side depending on the spatial variable only.
One of the main difficulties in the time dependent measure case is that
of identifying the right analogue of the elliptic Wolff potential corresponding to $p$-Laplacian.

It is the aim of this note to introduce a parabolic version of
the Wolff potential and in terms of this newly defined
potential to establish pointwise estimates for solutions to
parabolic equations in the degenerate case $p\ge2$ with the
time-dependent measures on the right hand side. The form of the
parabolic potential introduced in the note is such that it
reduces to the truncated Wolff potential if the measure does
not depend on time, and  it reduces to the truncated Riesz
potential in the case $p=2$, so we recover the corresponding
result in \cite{DM2,DM3}.

We are concerned with
weak
solutions for
the divergence type quasi-linear parabolic
equations
\begin{equation}
\label{e0}
u_t- \Delta_p u = \mu\quad \text{in}\ \O_T:=\O\times (0,T),
\end{equation}
where $\O \subset \R^N$ is a domain and $T>0$, and $\mu$ is an
$\R^{N+1}$-valued (non-negative) Radon measure on $\O_T$. To
this end we introduce a parabolic analog of the non-linear
Wolff potentials.

Before formulating the main results, let us remind the reader
of the definition of a weak solution to equation \eqref{e0}.

We say that $u$ is a weak solution to \eqref{e0} if $u\in
V(\O_T):=C([0,T]; L^2_{loc}(\O))\cap L^p_{loc}(0,T;
W^{1,p}_{loc}(\O))$ and for any sub-domain $\O^\prime\Subset\O$
and any interval $I=[t_1,t_2]\subset (0,T)$ the integral
identity
\begin{equation}
\label{test}
\begin{split}
\int_{\O}u(t)\theta(t) dx\Big|_{t_1}^{t_2}+
\iint_{\O\times I}|\n u|^{p-2}\n u\n\theta\, dx\,dt\\
= \iint_{\O\times I} \theta\,d\mu + \iint_{\O\times I} u\partial_t\theta dx\,dt
\end{split}
\end{equation}
for any $\theta \in C_c^1(\O_T)$.

\bigskip

The crucial role in our
results is played by parabolic generalization of the truncated
Wolff potential, which is defined below.

\medskip

{\bf Parabolic Wolff potentials.} Let $\mu$ be a positive
measure on $\O_T$ and $(x_0,t_0)\in\O_T$. For $\rho,s>0$, let
$Q_{\rho,s}:=B_\rho(x_0)\times (t_0-s, t_0+s)$. For $\rho>0$
define
\begin{equation}\label{D}
    D_p(\rho):=\inf\limits_{\tau>0}\left\{i_p(\tau)+ \tfrac1{2(p-1)^{p-1}}\rho^{-N}\mu(Q_{\rho,\tau\rho^p})\right\},
\end{equation}
where
\begin{equation}\label{ip}
i_p(\tau):=
\begin{cases}
(p-2)\tau^{-\frac1{p-2}}, & p>2;
\\
\begin{cases}
+\infty, & \tau\in(0,1),\\
0, & \tau\ge1,
\end{cases}
& p=2.
\end{cases}
\end{equation}
Observe that $i_p(\tau)$ is continuous in $p$ for every $\tau>0$. Also
note that the above infimum is attained at some
$\tau\in(0,\infty]$ since the function under the infimum is
continuous in $\tau$. Moreover, $D_2(\rho)=\tfrac1{2}\rho^{-N}\mu(Q_{\rho,\rho^2}).$

Now let, for $\rho>0$ and for $j=0,1,2,\ldots$
set $\rho_j:=2^{-j}\rho$. We define the parabolic potential for a
measure $\mu$ as follows:
\begin{equation}\label{pWolff}
P_p^\mu(x_0,t_0;\rho):= \sum\limits_{j=0}^\infty D_p(\rho_j).
\end{equation}
In particular, there exists $\g>1$ such that
\begin{equation}
\label{P2}
\frac1\g P_2^\mu(x_0,t_0;r) \le \int\limits_0^r \rho^{-N}\mu(Q_{\rho,\rho^2})\frac{d\rho}\rho
\le\g P_2^\mu(x_0,t_0;r),
\end{equation}
so that for $p=2$ the introduced potential is equivalent to the truncated Riesz potential used in the estimates in~\cite{DM2,DM3}.
Note that, for a time-independent $\mu$ charging all balls
centered at $x_0$, the minimum in the definition of $D_p(\rho)$
is attained at
$\tau=(\tfrac1{(p-1)^{p-1}}\rho^{p-N}\mu(B_\rho))^{\frac{p-2}{p-1}}$.
So
\begin{equation}\label{autonom}
D_p(\rho)=\left[\rho^{p-N}\mu(B_\rho)\right]^{\frac1{p-1}},\quad
P_p^\mu(x_0,t_0;\rho) = W_p^\mu(x_0,\rho),
\end{equation}
so that in this case the introduced potential reduces to the non-linear Wolff potential.
Moreover, with $\tau(\rho)$ defined as follows:
\[
\tau(\rho)=\tau_\mu(\rho;x_0,t_0)
:=\left(\rho^{-N}\mu(Q_{\rho,\rho^p})\right)^{-\frac{p-2}{p-1}},
\]
it is easy to see that there exists $\g=\g_p>0$ such that, for
all $\rho>0$,
\[
D_p(\rho)\le \g\left(\rho^{-N}\mu(Q_{\rho,\rho^p})\right)^{\frac1{p-1}}
+ \g\rho^{-N}\mu(Q_{\rho,\tau(\rho)\rho^p})
\]
and that
\[
P^\mu_p(x_0,t_0;\rho)\le\g \sum\limits_{j=0}^\infty\left\{
\left(\rho_j^{-N}\mu(Q_{\rho_j,\rho_j^p})\right)^{\frac1{p-1}}
+ \rho_j^{-N}\mu(Q_{\rho_j,\tau(\rho_j)\rho_j^p})\right\}.
\]
Note that if $\mu$ is a time-independent measure then there
exists $\g>1$ such that
\[
\frac1\g W(x_0,\rho)\le \sum\limits_{j=0}^\infty\left\{
\left(\rho_j^{-N}\mu(Q_{\rho_j,\rho_j^p})\right)^{\frac1{p-1}}
+ \rho_j^{-N}\mu(Q_{\rho_j,\tau(\rho_j)\rho_j^p})\right\} \le \g W(x_0,\rho).
\]

\bigskip
The main result of this paper  is the following theorem.

\begin{theorem}
\label{mainthrm}Let $u$ be a weak solution to
equation~\eqref{e0}.
Then, for every $\l\in (0, \min \{\frac1{p-1},\frac1N \}]$
there exists $\g>0$ depending on $p,N,c_0,c_1$ and $\l$, such
that for every Lebesgue point $(y,s)\in \O_T$ of $u_\pm$ and
$\rho,\theta>0$ such that
$Q_{\rho,\theta}:=\{x:|x-y|\le\rho\}\times
[s-\theta,s+\theta]\subset \O_T$, with an additional assumption that
$\rho^2\le\theta$ in case $p=2$, one has
\[
\begin{split}
u_\pm(y,s)\le
\g\Bigg\{\eps_{\rho,\theta} + \left(\frac1{\rho^{N+p}}\iint_{Q_{\rho,\theta}} u_\pm^{(1+\l)(p-1)}dxdt\right)^\frac{1}{1+\l(p-1)}\\
+ P^{\mu_\pm}_p(y,s;\rho) 
\Bigg\},
\end{split}
\]
with
\[
\eps_{\rho,\theta}:=
\begin{cases}
\rho^{\frac p{p-2}}\theta^{-\frac1{p-2}}, & p>2,\\
0, & p=2.
\end{cases}
\]
\end{theorem}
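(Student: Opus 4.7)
The strategy is a parabolic adaptation of the Kilpeläinen–Malý iteration, running a De Giorgi–type scheme on a sequence of intrinsic cylinders shrinking to $(y,s)$, with truncation levels whose increments are exactly $D_p(\rho_j)$. Assume $(y,s)=(0,0)$. For $j=0,1,\dots$ set $\rho_j=2^{-j}\rho$ and, given levels $k_0\le k_1\le\ldots$ to be chosen, pick an intrinsic time-scale $\tau_j>0$ that nearly attains the infimum in $D_p(\rho_j)$ and form cylinders $Q_j=Q_{\rho_j,\tau_j\rho_j^p}$. The initial level $k_0$ will be comparable to the $L^{(1+\lambda)(p-1)}$-average term, plus the scaling correction $\eps_{\rho,\theta}$ which accounts for the mismatch between the given cylinder $Q_{\rho,\theta}$ and the first intrinsic cylinder when $p>2$; the increment is $k_{j+1}-k_j\asymp D_p(\rho_j)$. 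The aim is to prove that for some fixed $\delta\in(0,\tfrac12)$ one has the recursive bound $(u-k_{j+1})_+\le\delta(u-k_j)_+$ on the smaller cylinder $Q_{j+1}$, whence $u\le\lim_j k_j\lesssim k_0+\sum_jD_p(\rho_j)$ at $(y,s)$ since it is a Lebesgue point.

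To establish the recursive bound at step $j$, I would test \eqref{test} with $\theta=(u-k_j)_+^{\alpha}\eta^p$ for a suitable cutoff $\eta$ supported in $Q_j$ and a suitable $\alpha\ge1$, and derive a Caccioppoli estimate of the form
\[
\sup_{t}\int(u-k_j)_+^{1+\alpha}\eta^p\,dx+\iint|\n(u-k_j)_+|^p\eta^p\,dx\,dt\le \gamma\,\text{(tail)}+\gamma\iint(u-k_j)_+\,d\mu.
\]
The measure term is absorbed using $(u-k_j)_+\le k_{j+1}-k_j$ on the set where we iterate, producing $(k_{j+1}-k_j)\mu(Q_j)/\rho_j^N$, which by the optimal choice of $\tau_j$ is controlled by $(k_{j+1}-k_j)D_p(\rho_j)$; the term $i_p(\tau_j)$ in $D_p$ arises from controlling the time-evolution boundary term $\int u\theta\,dx\big|_{t_1}^{t_2}$ on cylinders of non-standard aspect ratio $\tau_j\rho_j^p$. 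A parabolic Sobolev embedding then upgrades the energy inequality to an $L^q$ bound, and a standard De Giorgi iteration on levels $k_j\to k_{j+1}$ yields that $(u-k_{j+1})_+\le \delta(u-k_j)_+$ on $Q_{j+1}$, provided $k_{j+1}-k_j\ge c D_p(\rho_j)$ with $c$ large enough. The $L^{(1+\lambda)(p-1)}$-average hypothesis at the top scale is used only once to start the iteration: it guarantees that at $j=0$ the measure of $\{u>k_0\}\cap Q_0$ is small enough to launch the De Giorgi scheme, with the exponent $\lambda$ in the range $(0,\min\{1/(p-1),1/N\}]$ being exactly the range in which the required reverse-Hölder/interpolation step closes.

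The main obstacle, and what distinguishes this from the elliptic or $p=2$ parabolic case, is the choice of the time-scale $\tau_j$ at each dyadic level and the coupling between the energy estimate and the measure. Because the measure depends on time, one cannot fix $\tau_j$ a priori as in DiBenedetto's standard intrinsic geometry; instead $\tau_j$ must be selected to realize (up to constants) the infimum in \eqref{D}, and the Caccioppoli estimate must be carried out on cylinders whose time-length is $\tau_j\rho_j^p$ rather than $\rho_j^p$. This forces an extra error term in the initial time boundary integral of size $i_p(\tau_j)(k_{j+1}-k_j)^2\rho_j^N$, which is precisely the first summand in the definition of $D_p(\rho_j)$ and which must be absorbed into $(k_{j+1}-k_j)D_p(\rho_j)$ together with the measure contribution. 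Summing the telescoping bound $k_{j+1}\le k_j+cD_p(\rho_j)$ over $j\ge0$ gives $\lim_jk_j\le k_0+c\,P_p^\mu(y,s;\rho)$, and applying the same argument to $-u$ yields the bound for $u_-$, completing the proof.
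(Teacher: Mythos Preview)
Your proposal captures the overall architecture (iteration over dyadic intrinsic cylinders, levels whose increments sum to the potential, parabolic Sobolev embedding to close a De Giorgi loop), but there is a genuine gap in the way you treat the measure term, and this is exactly the point where the Kilpel\"ainen--Mal\'y mechanism differs from a standard De Giorgi iteration.

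With the test function $\theta=(u-k_j)_+^{\alpha}\eta^p$ the right-hand side of \eqref{test} produces $\iint (u-k_j)_+^{\alpha}\eta^p\,d\mu$, which you cannot bound by $(k_{j+1}-k_j)\mu(Q_j)$: the inequality $(u-k_j)_+\le k_{j+1}-k_j$ holds only on $\{u\le k_{j+1}\}$, while the integral is against a measure supported on the whole cylinder and you have no a priori control of $u$ on $\operatorname{supp}\mu$. The paper avoids this by testing with a \emph{bounded} nonlinear truncation $\tfrac1\delta\,\phi\!\big(\tfrac{u-l}{\delta}\big)\xi^m$, where $\phi(s)=\int_0^{s_+}(1+\tau)^{-1-\lambda}d\tau\le\tfrac1\lambda$, so that the measure term is simply $\le\tfrac1{\lambda\delta}\mu_+(Q)$. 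This is the core Kilpel\"ainen--Mal\'y idea and cannot be replaced by an unbounded power of $(u-k_j)_+$.

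A second, related discrepancy: in the paper the level increments $\delta_j$ are \emph{not} prescribed to be $cD_p(\rho_j)$; they are determined implicitly by the condition $A_j(\delta_j)=\varkappa$ on a weighted integral functional (when this is possible), and only afterwards is the inequality $\delta_j\le\tfrac12\delta_{j-1}+\gamma D_p(\rho_j)$ \emph{proved}. The intrinsic cylinders have time length $\eps_p\delta_j^{2-p}\rho_j^p$ (coupled to the unknown $\delta_j$), not $\tau_j\rho_j^p$; the parameter $\tau_j$ enters only through the constraint $\delta_j\ge i_p(\tau_j)$, which guarantees $Q_j\subset Q_{\rho_j,\tau_j\rho_j^p}$ and hence $\mu_+(Q_j)\le\mu_+(Q_{\rho_j,\tau_j\rho_j^p})$. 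Your interpretation of $i_p(\tau_j)$ as a boundary-in-time error coming from working on cylinders of length $\tau_j\rho_j^p$ does not match this mechanism. Finally, the conclusion you aim for, a pointwise contraction $(u-k_{j+1})_+\le\delta(u-k_j)_+$ on $Q_{j+1}$, is stronger than what the argument actually produces (and stronger than needed): the paper only shows that the integral quantity $A_j(\delta_j)\le\varkappa$ persists and that $\delta_j\to0$, which suffices to conclude $u_+(y,s)\le l_\infty$ at a Lebesgue point.
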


The estimate above is not homogeneous in $u$ which is usual for
such type of equations \cite{DiB, DiGV}. 
The proof of Theorem~\ref{mainthrm} is based on a suitable
modifications of De Giorgi's iteration technique \cite{DG}
following the adaptation of Kilpel\"ainen-Mal\'y technique
\cite{KiMa} to parabolic equations with ideas from \cite{LS2,
Skr1}.

\begin{corollary}
Let $u$ be a weak solution to equation~\eqref{e0}. Assume that,
for all $\O'\Subset\O$ and $I\Subset (0,T)$,
\[
\lim\limits_{\rho\to0}\sup\limits_{(x,t)\in\O'\times I}P^{|\mu|}_p(x,t;\rho)<\infty.
\]
Then $u\in L^\infty_{loc}(\O_T).$
\end{corollary}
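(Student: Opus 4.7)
The corollary follows by applying Theorem~\ref{mainthrm} at a conveniently fixed scale. First I would fix $\O'\Subset\O$ and $I\Subset(0,T)$, and note that $P^\mu_p(x,t;\rho)$ is monotone increasing in $\rho$: the identity $P^\mu_p(x,t;\rho)=D_p(\rho)+P^\mu_p(x,t;\rho/2)$ together with $D_p\ge0$ yields $P^\mu_p(x,t;\rho/2)\le P^\mu_p(x,t;\rho)$. Hence the hypothesis $\lim_{\rho\to0}\sup_{\O'\times I}P^{|\mu|}_p(\cdot;\rho)<\infty$ implies the existence of $\delta>0$ small enough that $Q_{\delta,\theta}\subset\O_T$ for every centre $(y,s)\in\O'\times I$ and some $\theta>0$ (also requiring $\theta\ge\delta^2$ when $p=2$), together with a finite constant $C_0$ such that $\sup_{(y,s)\in\O'\times I}P^{|\mu|}_p(y,s;\delta)\le C_0$.

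Next, I would apply Theorem~\ref{mainthrm} at every Lebesgue point $(y,s)\in\O'\times I$ of $u_\pm$ with $\rho=\delta$, the chosen $\theta$, and $\l:=\min\{\tfrac1{p-1},\tfrac1N\}$, to obtain
\[
u_\pm(y,s)\le\g\Bigl\{\eps_{\delta,\theta} + \Bigl(\tfrac1{\delta^{N+p}}\iint_{Q_{\delta,\theta}} u_\pm^{(1+\l)(p-1)}\,dxdt\Bigr)^{\frac1{1+\l(p-1)}} + P^{\mu_\pm}_p(y,s;\delta)\Bigr\}.
\]
The constant $\eps_{\delta,\theta}$ is fixed; the potential term is bounded by $P^{|\mu|}_p(y,s;\delta)\le C_0$ since $\mu_\pm\le|\mu|$; and the choice of $\l$ gives $(1+\l)(p-1)\le p$, so the integrand is dominated pointwise by $|u|^p+1$. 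Since $u\in L^p_{loc}(\O_T)$ by the definition of $V(\O_T)$, the integral is bounded uniformly in $(y,s)\in\O'\times I$.

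Combining the three bounds gives $u_\pm(y,s)\le C$ at every Lebesgue point of $u_\pm$ in $\O'\times I$, with $C$ depending only on $\O',I,u,\mu,p,N$. Since almost every point is a Lebesgue point of $u_\pm\in L^1_{loc}$, this yields $\|u_\pm\|_{L^\infty(\O'\times I)}\le C$, and hence $u\in L^\infty_{loc}(\O_T)$. The argument poses no real obstacle; the only technical observations are the monotonicity of $P^\mu_p$ in $\rho$, which converts the limit-hypothesis into a uniform bound at a single scale, and the choice of $\l$ that reduces the integral term to a local $L^p$ bound on $u$ already implied by the weak-solution framework.
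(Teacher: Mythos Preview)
Your argument is correct and is exactly the intended route: the paper states the corollary without proof, as a direct consequence of Theorem~\ref{mainthrm}, and your write-up supplies the natural details. One small remark: the identity $P^\mu_p(x,t;\rho)=D_p(\rho)+P^\mu_p(x,t;\rho/2)$ only gives monotonicity along dyadic scales, not full monotonicity in $\rho$; but this is harmless, since the finiteness of $\lim_{\rho\to0}\sup P$ already provides, by the definition of a limit, some $\delta>0$ with $\sup_{\O'\times I}P^{|\mu|}_p(\cdot;\delta)\le C_0$, and you are free to shrink $\delta$ further to meet the geometric constraint $Q_{\delta,\theta}\subset\O_T$.
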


\begin{remark}
In case $\mu(dx,dt)=\mu(x,t)dxdt$ we can estimate $P_p^{|\mu|}$
by the Lebesgue and Lorentz norms as follows.
\begin{enumerate}
  \item Let $\mu\in L^r\big(0,T;L^q(\O)\big)$ for $r>1$ and
      $q>\frac Np$. Then
      \[
\rho^{-N}\mu(Q_{\rho,\rho^p\tau})\le \g\tau^{1-\frac1r}\rho^{p-\frac pr-\frac Nq}\|\mu\|_{q,r}
      \]
      and
      \[
D_p(\rho)\le \g \left[\rho^{p-\frac pr-\frac Nq}\|\mu\|_{q,r}\right]^{\frac1{p-1 - \frac1r(p-2)}}.
      \]
Hence, if $\frac1r+\frac N{pq}<1$ then
\[\sup\limits_{x,t,\rho}P_p^{|\mu|}(x,t;\rho)\le
  \g\|\mu\|_{q,r}^{\frac1{p-1 - \frac1r(p-2)}}.\] In particular, we
  recover a classical condition on local boundedness of the
  solution $u$ (see, e.g., \cite[Remark 0.1]{DiB86}).

By the same argument one proves that, for $\mu\in
  L^q\big(\O;L^r(0,T)\big)$ with $r>1$ and $q>\frac Np$
  such that $\frac1r+\frac N{pq}<1$, the following estimate
  holds:
\[\sup\limits_{x,t,\rho}P_p^{|\mu|}(x,t;\rho)\le
  \g\|\mu\|_{r,q}^{\frac1{p-1 - \frac1r(p-2)}}.\]

  \item The latter estimates can be refined in terms of the
      Lorentz norms. Recall that, for a measurable function
      $f$, the non-increasing rearrangement $f^*$ and its
      average $f^{**}$ are defined as follows:
      \[
f^*(s):=\inf\{t:\; |\{|f|(\xi)> t\}|\le t\}, \quad f^{**}(s):=\frac1s\int\limits_0^sf^*(\sig)d\sig
      \]
and that the spaces $L^{q,\a}$, $0<q,\a\le \infty$ are
defined by the following translation-invariant metrics:
\[
\|f\|_{q,\a}:=
\begin{cases}
\left[\int\limits_0^\infty \left(s^{\frac1q}f^{**}(s)\right)^\a\frac{ds}s\right]^{\frac1a},& 0<q,\a<\infty,\\
\sup\limits_{s>0}s^{\frac1q}f^{**}(s),& 0<q\le\infty,\ \a=\infty.
\end{cases}
\]
It is clear that
\[
\int_Ef(\xi)d\xi \le \int\limits_0^{|E|}f^*(s)ds = |E|f^{**}(|E|)\le |E|^{1-\frac1r}\|f\|_{r,\infty}.
\]
Let $\mu\in L^{q,\a}\big(\O;L^{r,\infty}(0,T)\big)$, with
      $r>\frac{p-2}{p-1}$, $q=\frac N{p-\frac pr}$ and
      $\a=\frac1{p-1 - \frac1r(p-2)}$. Then we estimate
      \[
      \begin{split}
\tfrac12\int\limits_{-\tau\rho^p}^{\tau\rho^p}\mu(x,t)dt\le (\tau\rho^p)^{1-\frac1r}\|\mu\|_{r,\infty}(x)
\mbox{ and }
\tfrac12\rho^{-N}\mu(Q_{\rho,\rho^p\tau})\\
\le \frac1{\o_N} \tau^{1-\frac1r}\rho^{p-\frac pr}\|\mu\|_{r,\infty}^{**}(\o_N\rho^N),
\end{split}
      \]
where $\o_N$ denotes the volume of a unit ball in $\R^N$.
Hence
\[D_p(\rho)\le \g \left[\rho^{p-\frac
pr}\|\mu\|_{r,\infty}^{**}(\o_N\rho^N)\right]^\a\] and
\[
\begin{split}
&\sup\limits_{x,t}P_p^{|\mu|}(x,t;\rho)\le \g\int\limits_0^\rho \left[s^{p-\frac
pr}\|\mu\|_{r,\infty}^{**}(\o_Ns^N)\right]^\a\frac{ds}s\\
&=\g\int\limits_0^{\o_N\rho^N}\left[s^{\frac{p-\frac
pr}N}\|\mu\|_{r,\infty}^{**}(s)\right]^\a\frac{ds}s
\le  \|\mu\|_{(r,\infty), (q,\a)}^\a.
\end{split}
\]
\end{enumerate}

\end{remark}

The rest of the paper contains the proof of Theorem
\ref{mainthrm}.

\section{ Proof of Theorem~\ref{mainthrm}}\label{bdd}

We start with some auxiliary integral estimates for the
solutions of \eqref{e0} which are formulated in the next lemma.
Let
\[
\eps_p:=\begin{cases}
(p-2)^{p-2}, & p>2;\\
1, & p=2.
\end{cases}
\]
Note that $\eps_p$ is continuous and that $\eps_p\ge e^{-\frac1e}>\frac12$. For $\l\in(0,1)$ we define
\begin{equation}\label{functions}
    G(s):=s_+^2\wedge s_+ \mbox{ and }
    \psi(s):=(1+s_+)^{1-\frac{1+\lambda}p} -1\asymp s_+\wedge s_+^{\frac{p-1-\lambda}p}.
\end{equation}
For $\d>0$ and $0<\rho<R$ define,
\[
I_\rho^{(\d)}(s):=(s-\eps_p\d^{2-p}\rho^p,\, s+\eps_p\d^{2-p}\rho^p),\quad Q_\rho^{(\d)}(y,s)=B_\rho(y)\times I_\rho^{(\d)}(s).
\]
In the sequel, $\g$ stands for a constant which depends only on
$N,p,c_0,c_1$ and $\l$, and which may vary from line to line.
\begin{lemma}
\label{lem2.3b} 
Let $\l\le\frac1{p-1}$  and $m\ge p$. Then there exists a
constant $\g>0$ depending only on $N,p,c_0,c_1,\l$ and $m$,
such that, for every solution $u$ to \eqref{e0} in $\O_T$,
every $l,\d>0$, and $(y,s)\in \O_T$ such that the cylinder
$Q_\rho^{(\d)}(y,s)\subset \O_T$, and every $\xi\in
C_c^\infty(Q_\rho^{(\d)}(y,s))$ such that $0\le\xi\le1$ and
$|\xi_t|\le 8\d^{p-2}\rho^{-p}$ and $|\n\xi|\le4\rho^{-1}$, the
following estimate holds.
\begin{equation}\label{main_est}
\begin{split}
&\sup\limits_{t\in I_\rho^{(\d)}(s)} \frac1{\rho^N}  \int_{B_\rho(y)}G\left(\frac{u-l}{\d}\right)\xi(x,t)^mdx\\
&+\frac{\d^{p-2}}{\rho^N}\iint_{L}
\left|\n\psi\left(\frac{u-l}\d\right)\right|^p\xi^mdx\,dt
\\
&\le  \g \frac{\d^{p-2}}{\rho^{p+N}}\iint_L G\left(\frac{u-l}\delta\right)\xi^{m-1}dx\,dt\\
& +\g \frac{\d^{p-2}}{\rho^{p+N}}\iint_L
\left(\frac{u-l}{\d}\right)^{(1+\l)(p-1)}\xi^{m-p}dx\,dt
+ \g \frac1{\d\rho^N} \mu_+\left(Q_\rho^{(\d)}(y,s)\right),
\end{split}
\end{equation}
where $L=Q_\rho^{(\d)}(y,s)\cap\{u>l\}$, $L(t)=L\cap
\{\tau=t\}$.
\end{lemma}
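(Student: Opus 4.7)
The estimate is a Caccioppoli-type inequality for the truncation $(u-l)_+$ on a cylinder with the intrinsic parabolic scaling $\delta^{2-p}\rho^p$. The natural approach is to substitute a carefully chosen test function into the weak formulation~\eqref{test}.

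The key choice will be $\theta := \varphi(w)\xi^m$ with $w := (u-l)_+/\delta$ and $\varphi(s) := 1 - (1+s_+)^{-\lambda}$. Three algebraic compatibilities motivate this selection: (i) its antiderivative $\Phi(s):=\int_0^s\varphi(\tau)\,d\tau$ satisfies $\Phi(s) \asymp G(s)$ (each is $\asymp s^2$ for small $s$ and $\asymp s$ for large $s$); (ii) its derivative $\varphi'(s) = \lambda(1+s_+)^{-(1+\lambda)}$ is comparable to $|\psi'(s)|^p$, since both are proportional to $(1+s)^{-(1+\lambda)}$, so the principal gradient term generated by $\iint |\nabla u|^{p-2}\nabla u\cdot\nabla\theta$ is comparable to $\delta^{p-1}|\nabla\psi(w)|^p\xi^m$; (iii) the quotient $\varphi(s)^p/(\varphi'(s))^{p-1}$ is dominated by $C_{p,\lambda}\,s^{(1+\lambda)(p-1)}$, precisely under the restriction $\lambda(p-1)\le 1$---the point at which the hypothesis on $\lambda$ first enters. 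Because $\theta$ is only Lipschitz in $u$, the test will first be inserted into a Steklov-averaged version of~\eqref{test} and the limit taken in the standard way.

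After testing and using $u_t\varphi(w) = \delta\,\partial_t\Phi(w)$, integration by parts will yield, for every $t^*\in I_\rho^{(\delta)}(s)$,
\begin{equation*}
\begin{split}
\delta \int_{B_\rho(y)} \Phi(w)\xi^m\, dx\Big|_{t^*} &+ \frac{1}{\delta}\iint_{\{t<t^*\}}|\nabla u|^p \varphi'(w)\xi^m\, dx\,dt \\
&= \delta \iint \Phi(w)\, m\xi^{m-1}\xi_t\, dx\,dt \\
&\quad - m\iint |\nabla u|^{p-2}\nabla u\cdot \nabla \xi\, \varphi(w)\xi^{m-1}\, dx\,dt + \iint \varphi(w)\xi^m\, d\mu,
\end{split}
\end{equation*}
the lower boundary term vanishing because $\xi$ is compactly supported in $Q_\rho^{(\delta)}(y,s)$. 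Taking the supremum over $t^*$ in the first LHS term, and $t^*$ equal to the upper endpoint in the gradient term, then summing, reproduces the LHS of the lemma. The measure term is controlled by $\mu_+(Q_\rho^{(\delta)}(y,s))$ since $0\le\varphi(w)\xi^m\le 1$. The $\xi_t$-term is handled using $|\xi_t|\le 8\delta^{p-2}\rho^{-p}$ together with (i), producing the first term on the RHS of~\eqref{main_est}. For the mixed $|\nabla u|^{p-2}\nabla u\cdot\nabla\xi$ term, Young's inequality with exponents $p/(p-1)$ and $p$ applied via the splitting
\[
|\nabla u|^{p-1}|\nabla\xi|\varphi(w)\xi^{m-1} = \Bigl[|\nabla u|^{p-1}(\varphi'(w))^{(p-1)/p}\xi^{m(p-1)/p}\Bigr]\cdot\Bigl[\frac{\varphi(w)\xi^{(m-p)/p}|\nabla\xi|}{(\varphi'(w))^{(p-1)/p}}\Bigr]
\]
gives a small multiple of the LHS gradient term plus a residual of the form $C_\varepsilon \delta^{p-2}\rho^{-N-p}\iint \varphi^p(\varphi')^{1-p}\xi^{m-p}\,dx\,dt$, which (iii) then converts into the second RHS term. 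Dividing by $\delta\rho^N$ and using $(\varphi'(w))^{1/p}\asymp|\psi'(w)|$ to turn $\delta^{-2}|\nabla u|^p\varphi'$ into $\delta^{p-2}|\nabla\psi(w)|^p$ completes the derivation of~\eqref{main_est}. The hypothesis $m\ge p$ enters only to keep the exponent $(m-p)/p$ of $\xi$ nonnegative.

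The main obstacle is not any single calculation but the coordinated choice of $\varphi$: one must simultaneously enforce (i), (ii), and (iii), and it is exactly (iii) that forces $\lambda\le 1/(p-1)$. Once $\varphi$ is fixed, everything else is a standard parabolic Caccioppoli computation with the intrinsic scaling $\delta^{2-p}\rho^p$ built in through the cutoff bounds $|\xi_t|\le 8\delta^{p-2}\rho^{-p}$ and $|\nabla\xi|\le 4\rho^{-1}$.
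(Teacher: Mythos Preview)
Your proposal is correct and follows essentially the same route as the paper: your test function $\varphi(s)=1-(1+s_+)^{-\lambda}$ is exactly $\lambda\phi(s)$ for the paper's $\phi(s)=\int_0^{s_+}(1+\tau)^{-1-\lambda}\,d\tau$, so the derivatives, the antiderivative $\Phi\asymp G$, the identification $\varphi'\asymp|\psi'|^p$, and the Young-remainder bound $\varphi^p/(\varphi')^{p-1}\le C\,s^{(1+\lambda)(p-1)}$ all coincide with the paper's computations (the paper writes the last step as $(1+s)^{\lambda(p-1)-1}s^p\le s^{(1+\lambda)(p-1)}$, which is your (iii) after using $\phi(s)\le\frac1\lambda\frac{s}{1+s}$). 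The only cosmetic difference is your use of Steklov averages in place of the paper's space--time mollifiers for the justification step.
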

\begin{proof}
For shortness, we write $B:=B_\rho(y)$, $I:=I_\rho^{(\d)}(s)$
and $Q:=Q_\rho^{(\d)}(y,s)$. We also denote $I(t):=I\cap (0,t)$
and $Q(t):=B\times I(t)$.

Let
\begin{equation}\label{phi}
\begin{split}
    \phi(s):=\int\limits_0^{s_+}(1+\tau)^{-1-\lambda}d\tau \asymp s_+\wedge 1 \asymp \frac {s_+}{1+s_+}\\
\mbox{ and } \Phi(s):=\int\limits_0^s\phi(\tau)d\tau \asymp G(s)=s_+^2\wedge s_+.
\end{split}
\end{equation}
Let $m_\eps$ and $M_\sig$ denote symmetric mollifiers in $t$
and in $x$, respectively.
 Note that $m_\eps
M_\sig$ is a contraction in $L^q(Q)$ and $C\big(I;L^q(B)\big)$
for all $q\in[1,\infty]$ and that $m_\eps M_\sig \to I$ as
$\eps,\sig\to0$ in the strong operator topology of the
aforementioned spaces for $q\in[1,\infty)$. Also, $m_\eps
M_\sig\theta\to \theta$ a.e. on $Q$ as $\eps,\sig\to0$. Further
on, for a function $\theta$ we denote $\theta_{\eps}:=m_\eps
M_\eps\theta$.

We choose $\theta^{(\eps)}:=\frac1\delta \left[\phi
\left(\frac{u_{\eps}-l}\delta \right)\xi^m\right]_{\eps}$ as a
test function in \eqref{test}. Then we have that
\begin{equation}
\label{subst}
\begin{split}
\int_{B}u(t)\theta^{(\eps)}(t) dx +
\iint_{Q(t)} |\n u|^{p-2}(\n u)\n\theta^{(\eps)}dx\,dt\\
=\iint_{Q(t)} \theta^{(\eps)} d\mu +
\iint_{Q(t)} u\partial_t\theta^{(\eps)} dx\,dt.
\end{split}
\end{equation}
Note that $\theta^{(\eps)}\to\theta:=\frac1\delta \phi
\left(\frac{u-l}\delta \right)\xi^m$ in $C(I;L^q(B))\cap L^p(I;
\W^{1,p}(B))$ as $\eps\to0$ for all $q\in[1,\infty)$ since
$\phi$ is a bounded continuous function. Hence
\begin{equation}\label{limit-moll}
\begin{split}
\int_{B}u(t)\theta^{(\eps)}(t) dx + \iint_{Q(t)} |\n u|^{p-2}(\n u)\n\theta^{(\eps)}dx\,dt \\
{\rightarrow} \int_{B}u(t)\theta dx
+
\iint_{Q(t)} |\n u|^{p-2}(\n u)\n\theta dx\,dt \quad\text{as}\ \eps\to0.
\end{split}
\end{equation}
Since $m_\eps M_\sig$ is a contraction in $L^\infty(Q)$, we have that
$\theta^{(\eps)}\le \sup\phi=\frac1{\d\l}$. Therefore we obtain that
\begin{equation}\label{lower-order}
\iint_{Q(t)} \theta^{(\eps)}d\mu
\le \tfrac1{\d\l}\mu_+\big(Q(t)\big).
\end{equation}
Now we consider the last integral on the right hand side of
\eqref{subst}. Since $m_\eps M_\sig$ is a self-adjoint operator
commuting with the derivative,
\begin{equation*}
    \begin{split}
&\iint_{Q(t)}u\partial_t\theta^{(\eps)}dx\,dt =
\int_B u_\eps(t) \frac1\delta \phi \left(\frac{u_\eps(t)-l}\delta
\right)\xi^m(t) dx
\\
&- \iint\limits_{Q(t)} (\partial_t u_\eps) \frac1\delta \phi \left(\frac{u_\eps-l}\delta
\right)\xi^m dx\,dt
\\
= & \int_B u_\eps(t)\frac1\delta \phi \left(\frac{u_\eps(t)-l}\delta
\right)\xi^m(t) dx -\iint\limits_{Q(t)} \xi^m \partial_t\Phi \left(\frac{u_\eps-l}\delta
\right)dx\,dt
\\
= & \int_B u_\eps(t)\frac1\delta \phi \left(\frac{u_\eps(t)-l}\delta
\right)\xi^m(t) dx -\int\limits_{B} \Phi \left(\frac{u_\eps(t)-l}\delta
\right)\xi^m(t)dx
\\
& + m\iint\limits_{Q(t)} \Phi \left(\frac{u_\eps-l}\delta
\right)\xi^{m-1}\xi_tdx\,dt.
\end{split}
\end{equation*}
Since $\Phi$ is a Lipschitz continuous function, we conclude that
\begin{equation}\label{pbolic}
\begin{split}
\iint_{Q(t)}u\partial_t\theta^{(\eps)}dx\,dt
{\rightarrow}
\int_B u(t)\theta(t) dx
-\int\limits_{B} \Phi \left(\frac{u(t)-l}\delta
\right)\xi^m(t)dx\\
+ m\iint\limits_{Q} \Phi \left(\frac{u-l}\delta
\right)\xi^{m-1}\xi_tdx\,dt \quad\text{as} \ \eps\to0.
    \end{split}
\end{equation}

Collecting
\eqref{subst}--\eqref{pbolic} we obtain the following inequality:
\begin{equation*}
\begin{split}
&\int\limits_{B} \Phi \left(\frac{u(t)-l}\delta
\right)\xi^m(t)dx
+ \iint_{Q(t)} |\n u|^{p-2}(\n u)\n\theta dx\,dt
\\
&
\le m\iint\limits_{Q} \Phi \left(\frac{u-l}\delta
\right)\xi^{m-1}\xi_tdx\,dt + \frac1{\d\l}\mu_+\big(Q\big).
\end{split}
\end{equation*}
Taking the supremum in $t$, we obtain
\begin{equation}\label{sup-in-t}
    \begin{split}
&\sup\limits_{t\in I}\int\limits_{B} \Phi \left(\frac{u(t)-l}\delta
\right)\xi^m(t)dx
 + \iint_{Q} |\n u|^{p-2}\n u \n\theta dx\,dt
\\
&
\le m\iint\limits_{Q} \Phi \left(\frac{u-l}\delta
\right)\xi^{m-1}\xi_tdx\,dt + \frac1{\d\l}\mu_+\big(Q\big).
\end{split}
\end{equation}
Now we estimate the second term on the left hand side of
\eqref{sup-in-t} as follows.
\begin{equation}\label{elliptic}
\begin{split}
&\iint\limits_{Q} |\n u|^{p-2}\n u\n\theta
dx\,dt
\ge  \frac{1}{\d^{2}}\iint\limits_{L} \left(1+\frac{u-l}\delta
\right)^{-1-\lambda}|\n u|^p\xi^mdx\,dt
\\
& - \g\frac{1}{\l\d}\iint\limits_{L}|\n u|^{p-1}\left(1+\frac{u-l}\delta
\right)^{-1}\left(\frac{u-l}\delta\right)|\n \xi|\xi^{m-1}dx\,dt
\\
&\ge  \frac{1}{2\d^{2}} \iint\limits_{L} \left(1+\frac{u-l}\delta
\right)^{-1-\lambda}|\n u|^p\xi^mdx\,dt
\\
& - \g \frac{\d^{p-2}m^p}{\l^p}\iint\limits_{L} \left(1+\frac{u-l}\delta
\right)^{\lambda(p-1)-1}\left(\frac{u-l}\delta\right)^p|\n \xi|^p\xi^{m-p}dx\,dt.
\end{split}
\end{equation}
Observe now that $G \le \Phi\le \frac1\l G$ and
$\psi'(s)=(1+s)^{-\frac{1+\lambda}p}$, that $|\xi_t|\le
4\d^{p-2}\rho^{-p}$ and $|\n\xi|\le4\rho^{-1}$, and that
$(1+s)^{\l(p-1)-1}s^p\le s^{(1+\l)(p-1)}$ since $\l(p-1)\le 1$.
Hence we conclude from \eqref{pbolic} and \eqref{elliptic} that
\[
\begin{split}
&\sup\limits_{t\in I}\int_{L(t)}  G\left(\frac{u(t)-l}\delta\right)\xi^m(t)dx
+ \d^{p-2}\iint_L \left|\n \psi\left(\frac{u-l}\delta\right)\right|^p\xi^mdx\,dt
\\
&\le
\g\frac{\d^{p-2}}{\rho^p}\iint_L G\left(\frac{u-l}\delta\right)\xi^{m-1}dx\,dt
+\g \frac{\d^{p-2}}{\rho^p}\iint_L \left(\frac{u-l}\delta
\right)^{(1+\lambda)(p-1)}\xi^{m-p}dx\,dt
\\
&
 + \g \frac1{\d} \mu_+(Q).
\end{split}
\]
\end{proof}

\begin{remark}
The constant $\g$ in \eqref{main_est} is proportional to a
power of $m\max\phi=\frac m\l$, where $\phi$ is defined in
\eqref{phi}. In particular, it blows up as $\l\downarrow0$.
\end{remark}

\bigskip

Let $(y,s)$ be an arbitrary point in $\O_T$. Fix
$\rho,\theta>0$ such that
$\rho<\operatorname{dist}(y,\partial\O)$ and
$\theta<\min\{s,T-s\}$. For $p=2$ assume, in addition, that $\rho^2\le\theta$.
Fix $\d_{\rho,\theta}$:
\[
\d_{\rho,\theta}=
\begin{cases}
\left(\eps_p\rho^p\theta^{-1}\right)^{\frac1{p-2}}, & p>2,\\
0, & p=2.
\end{cases}
\]
Fix $m\ge 2p$ and $\xi\in C_c^\infty(B_1(0)\times (-1,1))$, such
that $0\le \xi\le 1$, $\xi(x,t)=1$ on
$B_{\frac12}(0)\times(-\frac12,\frac12)$, and $|\n \xi|<4$,
$|\partial_t\xi|<4$.

Fix a number $\varkappa\in (0,1)$
depending on $N,p,c_1,c_2$ and $\l$, which will be specified
later.

For $j=0,1,2,\dots$ positive numbers $l_j$ and $\d_j$ are
defined inductively as follows. We set
$\d_{-1}=2\d_{\rho,\theta}$ and $l_0=0$ and, for
$j=0,1,2,3,\ldots,$ given $\d_{j-1}$ and $l_{j}$, we define
$\d_j$ and $l_{j+1}$ as follows. We denote $\rho_j:=\rho 2^{-j}$, $B_j:=B_{\rho_j}(y)$ and
\[
\quad
\quad \tau_j:=
\sup\left\{\tau: i_p(\tau) + \tfrac1{2(p-1)^{p-1}}\rho^{-N}\mu(Q_{\rho_j,\tau\rho_j^p})=D_p(\rho_j)\right\},
\]
where $D(\rho_j)$ is as in \eqref{D}. For $\d\ge \hat\d_j$
with
\begin{equation}\label{min_delta}
\hat\d_j:=(\tfrac12\d_{j-1})\vee i_p(\tau_j),
\end{equation}
we define
\[
I_j^{\d}:=(s-\d^{2-p}\rho_j^p\eps_p,s+\d^{2-p}\rho_j^p\eps_p),
\ \ Q_j^{\d}:=B_j\times I_j^{\d},
\ \ L_j^\d:=\{(x,t)\in Q_j^\d:\,u(x,t)>l_{j}\}
\]
and, for $t\in I_j^\d$, \[L_j(t):=\{x\in B_j: u(x,t)>l_{j}\}.
\]
Then denote
\[
\xi_{j,\d}(x,t):=\xi\left(\frac{x-y}{\rho_j},\frac{t-s}{\d^{2-p}\rho_j^p\eps_p}\right).
\]
Note that $\xi_{j,\d}\in C_c^\infty(Q_j)$ and
$\xi_{j,\d}(x,t)=1$ for $(x,t)\in \frac12Q_j^\d$,
with the derivative estimates $|\n\xi_{j,\d}|\le 4
\rho_j^{-1}$, $|\partial_t \xi_{j,\d}|\le
4\d^{p-2}\rho_j^{-p}\eps_p^{-1}\le 8\d^{p-2}\rho_j^{-p} $.

Set
\begin{equation}\label{e3.1b}
\begin{split}
A_j(\d)=\frac{\d^{p-2}}{\eps_p\rho_j^{N+p}}\iint_{L_j^\d}\left(\frac{u-l_j}{\d}\right)^{(1+\l)(p-1)}\xi_{j,\d}^{m-p}dx\,dt\\
+\sup\limits_{t\in I_j^\d}\,
\frac1{\rho_j^N}\int_{L_j(t)}G\left(\frac{u-l_j}{\d}\right)\xi_{j,\d}^m
dx.
\end{split}
\end{equation}
For $j=0,1,2,\dots$,
if
\begin{equation}
\label{e3.3b}
A_j(\hat\d_j)\le \varkappa,
\end{equation}
we set $\d_j=\hat\d_j$ and $l_{j+1}=l_j+\d_j$.

Note that $A_j(\d)$ is continuous as a function of $\d$ and
$A_j(\d)\rightarrow 0$ as $\d\to \infty$. So if
\begin{equation}
\label{e3.4b}
A_j(\hat\d_j)> \varkappa,
\end{equation}
there exists $\hat\d>\hat\d_j$ such that
$A_j(\hat\d)=\varkappa$. In this case we set $\d_j=\hat\d$ and
$l_{j+1}=l_j+\d_j$.

With fixed $\d_j$, we set $I_j:=I_j^{\d_j}$, $Q_j:=Q_j^{\d_j}$,
$L_j:=L_j^{\d_j}$ and $\xi_j:=\xi_{j,\d_j}$.

The following proposition is a key in the Kilpel\"ainen-Mal\'y
technique \cite{KiMa}.
\begin{proposition}
\label{delta} One can choose $\vark>0$ such that there exists
$\g\ge1$ depending on the data, such that
\begin{equation}
\label{deltaj}
\d_j\le \tfrac12 \d_{j-1} + \g D_p(\rho_j),
\end{equation}
for $j= 1,2,3,\ldots,$ and, for $j=0$,
\begin{equation}\label{delta0}
\d_0\le \d_{\rho,\theta} + \g\Bigg(\frac1{\rho^{N+p}} \iint_{Q_{\rho,\theta}}u_+^{(1+\l)(p-1)}\Bigg)^{\frac1{1+\l(p-1)}}
+ \g D_p(\rho).
\end{equation}

\end{proposition}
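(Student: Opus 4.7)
The plan is to exploit the monotonicity of $A_j(\cdot)$. I first observe that $\delta\mapsto A_j(\delta)$ is continuous and non-increasing on $[\hat\delta_j,\infty)$: both the sup-term (since $G$ is increasing and $(u-l_j)/\delta$ decreases in $\delta$) and the power-term (whose explicit $\delta$-dependence is $\delta^{-(1+\lambda(p-1))}$, compounded by the shrinking interval $I_j^\delta$ and cut-off $\xi_{j,\delta}$) are non-increasing. It therefore suffices to exhibit a $\delta^*\ge\hat\delta_j$ of the claimed form with $A_j(\delta^*)\le\varkappa$, since then $\delta_j\le\delta^*$. The case $\delta_j=\hat\delta_j$ is immediate: $\hat\delta_j\le\tfrac12\delta_{j-1}+i_p(\tau_j)\le\tfrac12\delta_{j-1}+D_p(\rho_j)$, using $i_p(\tau_j)\le D_p(\rho_j)$ directly from the definition of $D_p$.

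For the nontrivial case $\delta_j>\hat\delta_j$, for $j\ge1$ I set $\delta^*:=\tfrac12\delta_{j-1}+C_1 D_p(\rho_j)$ with $C_1\ge1$ large, to be fixed. A preparatory step is the measure control: the choice $\delta^*\ge i_p(\tau_j)$ (ensured by $C_1\ge1$) gives, for $p>2$, $\tau_j\ge\eps_p(\delta^*)^{2-p}$, hence $Q_j^{\delta^*}\subseteq Q_{\rho_j,\tau_j\rho_j^p}$, so that by optimality of $\tau_j$ in the definition of $D_p(\rho_j)$,
\[
\mu_+\big(Q_j^{\delta^*}\big)\le\mu_+\big(Q_{\rho_j,\tau_j\rho_j^p}\big)\le 2(p-1)^{p-1}\rho_j^N D_p(\rho_j);
\]
the case $p=2$ follows analogously from $\tau_j\ge1$ and the $\delta$-independent interval $I_j^{\delta^*}=(s\pm\rho_j^2)$.

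Next, apply Lemma~\ref{lem2.3b} to $u$ on $Q_j^{\delta^*}$ at level $l_j$ with $\xi=\xi_{j,\delta^*}$. The measure term contributes at most $\gamma D_p(\rho_j)/\delta^*\le\gamma/C_1$. The RHS $G$-integral is controlled via the inductive hypothesis $A_{j-1}(\delta_{j-1})\le\varkappa$: on $L_j$ one has $(u-l_{j-1})/\delta_{j-1}>1$, so the elementary pointwise bound
\[
G\!\left(\tfrac{u-l_j}{\delta^*}\right)\le \tfrac{\delta_{j-1}}{\delta^*}\!\left(\tfrac{u-l_{j-1}}{\delta_{j-1}}\right)^{(1+\lambda)(p-1)}
\]
holds on $L_j$, and together with $\xi_{j,\delta^*}\le\xi_{j-1}$ on $\operatorname{supp}\xi_{j,\delta^*}$ this reduces the $G$-integral to the power term of $A_{j-1}(\delta_{j-1})$, bounded by $\varkappa$. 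The RHS power term of the Lemma is absorbed into $A_j(\delta^*)$. The power term of $A_j(\delta^*)$ itself is then bounded by the LHS gradient term through parabolic Sobolev--Gagliardo--Nirenberg applied to $v:=\psi((u-l_j)/\delta^*)\,\xi_{j,\delta^*}^{m/p}$, where the two-sided hypothesis $\lambda\le\min\{1/(p-1),1/N\}$ calibrates the admissible exponents so that the resulting bound is super-linear in $\varkappa$. Bootstrapping yields
\[
A_j(\delta^*)\le\gamma\varkappa^{1+\eta}+\gamma/C_1
\]
for some $\eta>0$; choosing $\varkappa$ small first and then $C_1$ large (both depending only on $N,p,c_0,c_1,\lambda$) gives $A_j(\delta^*)\le\varkappa$, proving~\eqref{deltaj}.

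The base case $j=0$ follows the same scheme with $l_0=0$ and no $A_{-1}$ available; instead the identity $Q_0^{\delta_{\rho,\theta}}=Q_{\rho,\theta}$ allows one to dominate both the RHS $G$-integral and the $A_0$-power term directly by $\iint_{Q_{\rho,\theta}}u_+^{(1+\lambda)(p-1)}$, and the choice
\[
\delta^*:=\delta_{\rho,\theta}+C_1 D_p(\rho)+C_2\big(\rho^{-N-p}\iint_{Q_{\rho,\theta}}u_+^{(1+\lambda)(p-1)}\big)^{1/(1+\lambda(p-1))}
\]
yields~\eqref{delta0}. The principal technical obstacle is the parabolic Sobolev step--converting the LHS gradient bound from Lemma~\ref{lem2.3b} into super-linear control on the $A_j$-power term--where the exponent arithmetic forces precisely the two-sided constraint on $\lambda$; a supplementary split of $L_j$ according to the size of $(u-l_{j-1})/\delta_{j-1}$ (exploiting the sup-part of $A_{j-1}(\delta_{j-1})\le\varkappa$ via the trivial estimate $G\le1$) is also needed to prevent the ratio $\delta^*/\delta_{j-1}$ from appearing unfavourably when $p>3$.
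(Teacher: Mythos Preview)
Your strategy---fix a candidate $\delta^*$ of the desired form and verify $A_j(\delta^*)\le\varkappa$, then conclude $\delta_j\le\delta^*$ by monotonicity---is a legitimate alternative to the paper's argument, but it is organised differently and has a few loose ends.

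The paper never invokes monotonicity of $A_j$. Instead, in the nontrivial case it works \emph{at} $\delta_j$, where $A_j(\delta_j)=\varkappa$, and combines Lemma~\ref{lem2.3b} with a Sobolev/H\"older step (Lemma~\ref{eps-est}) and the auxiliary smallness bounds $|L_j(t)|\le\gamma\varkappa\rho_j^N$ and $\int_{L_j(t)}(u-l_j)/\delta_j\,dx\le\gamma\varkappa\rho_j^N$ (both inherited from the sup-part of $A_{j-1}(\delta_{j-1})\le\varkappa$) to arrive at
\[
\varkappa\le\eps\varkappa+\gamma(\eps)\varkappa^{p/N}\Bigl(\varkappa+\tfrac{1}{\delta_j\rho_j^N}\mu_+(Q_j)\Bigr)+\gamma\,\tfrac{1}{\delta_j\rho_j^N}\mu_+(Q_j).
\]
Taking $\eps=\tfrac12$ and $\varkappa$ small forces $\tfrac{1}{\delta_j\rho_j^N}\mu_+(Q_j)\ge\gamma\varkappa$, hence $\delta_j\le\gamma\rho_j^{-N}\mu_+(Q_j)\le\gamma D_p(\rho_j)$ via the inclusion $Q_j\subset Q_{\rho_j,\tau_j\rho_j^p}$. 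This is shorter than constructing $\delta^*$ and re-running the estimates there.

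Three caveats on your version. (i) The monotonicity of $\delta\mapsto A_j(\delta)$ is not automatic: it requires the time profile of $\xi$ to be non-increasing in $|t|$, which the paper does not assume (though it could). Without it, $A_j(\delta^*)\le\varkappa$ at a single point does not force $\delta_j\le\delta^*$. (ii) Your pointwise bound $G((u-l_j)/\delta^*)\le(\delta_{j-1}/\delta^*)\bigl((u-l_{j-1})/\delta_{j-1}\bigr)^{(1+\lambda)(p-1)}$, once inserted in the $G$-integral of Lemma~\ref{lem2.3b}, produces a factor $(\delta^*/\delta_{j-1})^{p-3}$ which is unbounded for $p>3$; the paper avoids this cleanly by using $G(s)\le s$ together with the sup-part of $A_{j-1}$, so that the prefactor $\delta^{p-2}$ is cancelled by $|I_j^\delta|\asymp\delta^{2-p}\rho_j^p$---this is simpler than your ``supplementary split''. (iii) Your bootstrap conclusion $A_j(\delta^*)\le\gamma\varkappa^{1+\eta}+\gamma/C_1$ overstates the gain: an $\eps\varkappa$ term survives, so the honest inequality is $A_j(\delta^*)\le\eps\varkappa+\gamma(\eps)\varkappa^{1+p/N}+\gamma(\eps)/C_1$, which still yields $A_j(\delta^*)\le\varkappa$ after choosing $\eps$, then $\varkappa$, then $C_1$.
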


The proof of Proposition \ref{delta} is split into several
lemmas.
\begin{lemma}\label{claim}
For $j= 1,2,3,\ldots,$ we have
\begin{equation}\label{imb1}
    Q_j\subset \tfrac12 Q_{j-1}~j=1,2,3\ldots,\mbox{ and } Q_j\subset Q_{\rho,\theta}~j=0,1,2,\ldots,
\end{equation}
so in particular $\xi_{j-1}\equiv1$ on $Q_j$, $j=1,2,3\ldots;$
\begin{equation}\label{imb2}
    Q_j\subset Q_{\rho_j,\tau_j\rho_j^p}
        ,~j=0,1,2,\ldots.
\end{equation}
\begin{equation}
\label{int0}
\frac{\d_j^{p-2}}{\eps_p \rho_j^{p+N}}|L_j|\le  \sup\limits_{t\in I_j}\frac{|L_j(t)|}{\rho_j^N}\le  2^N \vark
,~j=1,2,\ldots
\end{equation}
and
\begin{equation}\label{int1}
\sup\limits_{t\in I_j}\frac1{\rho_j^N}\int_{L_j(t)}\frac{u(x,t)-l_j}{\d_j}dx \le 2^{N+1}\vark
,~j=1,2,\ldots.
\end{equation}
\begin{equation}\label{int2}
\frac{\d_j^{p-2}}{\eps_p\rho_j^{N+p}}\int_{L_j}\left(\frac{u(x,t)-l_j}{\d_j}\right)^{(1+\l)(p-1)}dx\,dt
\le 2^{N+(1+\l)(p-1)}\vark
,~j=1,2,\ldots.
\end{equation}
There exists $\g>0$ such that, for $j=1,2,\ldots,$
\begin{equation}\label{est0}
\frac{\d_j^{p-2}}{\rho_j^N}\iint_{L_j}
\left|\n\psi\left(\frac{u-l_j}{\d_j}\right)\right|^p\xi^mdx\,dt
\le \g \vark
 + \g\frac1{\d_j\rho_j^N} \mu_+(Q_j).
\end{equation}

\end{lemma}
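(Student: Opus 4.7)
The whole lemma rests on the observation that $A_{j-1}(\delta_{j-1})\le\vark$ in every case: if $\delta_{j-1}=\hat\delta_{j-1}$ this is \eqref{e3.3b}, and if $\delta_{j-1}>\hat\delta_{j-1}$ then by continuity of $A_{j-1}$ in its argument $A_{j-1}(\delta_{j-1})=\vark$. Given this, I would dispose of the inclusions first. The spatial part of \eqref{imb1} is immediate, while for the time interval $\delta_j\ge\tfrac12\delta_{j-1}$ gives
\[
\frac{|I_j|}{|I_{j-1}|}=\Bigl(\frac{\delta_{j-1}}{\delta_j}\Bigr)^{p-2}2^{-p}\le 2^{p-2}\cdot 2^{-p}=\tfrac14.
\]
Iterating $\delta_j\ge\tfrac12\delta_{j-1}$ one gets $\delta_j\ge 2^{-j}\delta_{\rho,\theta}$, which combined with the identity $\delta_{\rho,\theta}^{2-p}\rho^p\eps_p=\theta$ yields $Q_j\subset Q_{\rho,\theta}$ (the case $p=2$ being covered by the assumption $\rho^2\le\theta$). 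For \eqref{imb2}, the identity $\eps_p^{1/(p-2)}=p-2$ converts $\delta_j\ge i_p(\tau_j)$ into $\delta_j^{p-2}\ge\eps_p/\tau_j$, which is exactly the needed time-interval condition (for $p=2$, $i_p(\tau_j)<\infty$ forces $\tau_j\ge 1$, which suffices).

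For \eqref{int0}--\eqref{int2}, the key pointwise observation is that with $w:=(u-l_{j-1})/\delta_{j-1}$, on $L_j$ one has $w>1$, so $G(w)=w\ge 1$, while $\delta_j\ge\tfrac12\delta_{j-1}$ gives $(u-l_j)/\delta_j=(\delta_{j-1}/\delta_j)(w-1)\le 2w$. Together with $L_j\subset L_{j-1}$ and $\xi_{j-1}\equiv 1$ on $Q_j$ (from \eqref{imb1}), the sup-part of $A_{j-1}(\delta_{j-1})\le\vark$ directly produces \eqref{int0} (via $|L_j|\le|I_j|\sup_t|L_j(t)|$) and \eqref{int1}. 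For \eqref{int2}, the crucial algebraic identity is
\[
\delta_j^{p-2}\Bigl(\tfrac{u-l_j}{\delta_j}\Bigr)^{(1+\l)(p-1)}=\delta_j^{-(1+\l(p-1))}\delta_{j-1}^{(1+\l)(p-1)}(w-1)^{(1+\l)(p-1)}\le 2^{1+\l(p-1)}\delta_{j-1}^{p-2}w^{(1+\l)(p-1)},
\]
again using $\delta_j\ge\tfrac12\delta_{j-1}$ in the only permissible direction (the exponent $-(1+\l(p-1))$ is negative since $\l(p-1)\le 1$). The integral part of $A_{j-1}(\delta_{j-1})\le\vark$ on $L_{j-1}$, with the volume scaling $\rho_{j-1}^{N+p}=2^{N+p}\rho_j^{N+p}$, then finishes \eqref{int2}.

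Finally, \eqref{est0} is obtained by applying Lemma~\ref{lem2.3b} with $l=l_j$, $\delta=\delta_j$ and cutoff $\xi_j$ on $Q_j$. The measure term of \eqref{main_est} matches the one in \eqref{est0}; the second integral on the right of \eqref{main_est} is $\eps_p$ times the integral part of $A_j(\delta_j)$, already controlled by \eqref{int2}. For the first term, $G((u-l_j)/\delta_j)\le 2G(w)$ on $L_j$ (an elementary property of $G(s)=s_+^2\wedge s_+$), and then
\[
\iint_{L_j}G(w)\,dx\,dt\le|I_j|\sup_{t\in I_j}\int_{L_{j-1}(t)}G(w)\xi_{j-1}^m\,dx\le|I_j|\rho_{j-1}^N\vark,
\]
after which the cancellation $|I_j|\delta_j^{p-2}=2\eps_p\rho_j^p$ produces the desired $\gamma\vark$.

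The main obstacle throughout is the careful bookkeeping in \eqref{int2}, since $\delta_j$ can be arbitrarily larger than $\delta_{j-1}$: the rewriting above is engineered precisely so that only the direction $\delta_j\ge\tfrac12\delta_{j-1}$ is ever invoked, and the constraint $\l\le 1/(p-1)$ is exactly what keeps the balancing exponent $-(1+\l(p-1))$ on $\delta_j$ compatible with the $w^{(1+\l)(p-1)}$ factor that the inductive bound $A_{j-1}(\delta_{j-1})\le\vark$ controls.
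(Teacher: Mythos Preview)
Your proof is correct and follows essentially the same route as the paper: the inclusions via $\delta_j\ge\hat\delta_j$, the level-set and integral estimates by comparing to $A_{j-1}(\delta_{j-1})\le\vark$ through the pointwise bounds on $w=(u-l_{j-1})/\delta_{j-1}$, and \eqref{est0} from Lemma~\ref{lem2.3b} combined with the earlier estimates. One small inaccuracy worth flagging: the exponent $-(1+\l(p-1))$ in the \eqref{int2} computation is negative for every $\l>0$, so the constraint $\l\le 1/(p-1)$ plays no role at this step (it is used elsewhere, in the derivation of \eqref{main_est}); your argument for \eqref{int2} goes through without it.
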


\begin{proof}
The imbedding \eqref{imb1}-\eqref{imb2} follows from the choice
$\d_j\ge \hat\d_{j}$, with $\hat\d_{j}$ defined in
\eqref{min_delta}. Indeed, since $\d_j\ge \frac12\d_{j-1}$, one
has $\d_j^{2-p}\rho_j^p \le \frac14\d_{j-1}^{2-p}\rho_{j-1}^p$.
Hence \eqref{imb1}. Similarly, $\d_j\ge i_p(\tau_j)$
implies $\eps_p\d_j^{2-p}\rho_j^p\le \tau_j\rho_j^p$.
Hence \eqref{imb2}.

To prove \eqref{int0}, observe that, for $(x,t)\in L_j$ one has
\begin{equation}
\label{Lj}
\frac{u(x,t)-l_{j-1}}{\d_{j-1}}=1+\frac{u(x,t)-l_{j}}{\d_{j-1}}\ge 1.
\end{equation}
Since $\xi_{j-1}=1$ on $Q_j$ and $I_j\subset I_{j-1}$ and
$L_j(t)\subset L_{j-1}(t)$ for $t\in I_j$, we obtain
\begin{equation}\label{proof_int0}
\begin{split}
\sup\limits_{t\in I_j}\frac{|L_j(t)|}{\rho_j^N}
&\le \rho_j^{-N}\sup\limits_{t\in I_j}\int_{L_j(t)}G\left(\frac{u-l_{j-1}}{\d_{j-1}}\right)\xi_{j-1}^mdx
\\
&\le 2^N \rho_{j-1}^{-N}\sup\limits_{t\in
I_{j-1}}\int_{L_{j-1}(t)}G\left(\frac{u-l_{j-1}}{\d_{j-1}}\right)\xi_{j-1}^mdx
\le 2^N\vark,
\end{split}
\end{equation}
which proves \eqref{int0}. To verify \eqref{int1}, note that
$G(s) + 1>s$ for $s\ge0$. Then, since $\d_j\ge\frac12
\d_{j-1}$, one has, for $(x,t)\in L_j$,
\begin{equation}\label{L1j}
\frac{u(x,t)-l_{j}}{\d_{j}}\le 2\frac{u(x,t)-l_{j}}{\d_{j-1}}= 2\frac{u(x,t)-l_{j-1}}{\d_{j-1}} - 2
\le 2 G\left(\frac{u(x,t)-l_{j-1}}{\d_{j-1}}\right).
\end{equation}
So, by the same argument as in \eqref{proof_int0},
\begin{equation*}
\begin{split}
&\sup\limits_{t\in I_j}\frac1{\rho_j^N}\int_{L_j(t)}\left(\frac{u(t)-l_{j}}{\d_{j}}\right)dx
\le \sup\limits_{t\in I_j}\frac2{\rho_j^N}\int_{L_j(t)}G\left(\frac{u(t)-l_{j-1}}{\d_{j-1}}\right)\xi_{j-1}^mdx
\\
&\le \sup\limits_{t\in I_{j-1}}
 \frac{2^{N+1}}{\rho_{j-1}^N}\int_{L_{j-1}(t)}G\left(\frac{u(t)-l_{j-1}}{\d_{j-1}}\right)\xi_{j-1}^mdx
\le 2^{N+1}\vark.
\end{split}
\end{equation*}

The estimate \eqref{int2} follows from the next observation:
\[
\begin{split}
\d_j^{p-2}\left(\frac{u-l_{j}}{\d_{j}}\right)^{(1+\l)(p-1)}
= & \left(\frac{\d_j}{\d_{j-1}}\right)^{p-2-(1+\l)(p-1)}\d_{j-1}^{p-2}\left(\frac{u-l_{j}}{\d_{j-1}}\right)^{(1+\l)(p-1)}
\\
\le &
2^{1+\l(p-1)}\d_{j-1}^{p-2}\left(\frac{u-l_{j-1}}{\d_{j-1}}\right)^{(1+\l)(p-1)}.
\end{split}
\]

To conclude \eqref{est0} from \eqref{main_est} one has to
estimate the first term in the right hand side of the latter.
To do this, it suffices to observe that $G(s)\le s$ and apply
\eqref{int1}.
\end{proof}

\begin{lemma}\label{eps-est}
For every $\eps>0$ there exist $\g_1(\eps), \g_2(\eps)>0$ such
that, for $j=1,2,3,\ldots,$
\begin{equation}\label{eps-est1}
\begin{split}
\frac{\d_j^{p-2}}{\rho_j^{N+p}}\iint_{L_j}  \left(\frac{u-l_j}{\d_j}\right)^{(1+\l)(p-1)}\xi_j^{m-p}dx\,dt
\\
\le  \eps\vark + \g_1(\eps)\vark^{\frac pN}\left(\vark +
 \frac1{\d_j\rho_j^N} \mu_+(Q_j)\right)
\end{split}
\end{equation}
and
\begin{equation}\label{eps-est2}
\begin{split}
&\sup\limits_{t\in I_j}\frac1{\rho_j^{N}}\int_{L_j(t)} G\left(\frac{u(t)-l_j}{\d_j}\right)  \xi_j^{m}dx
\\
\le & \eps\vark + \g_2(\eps)\vark^{\frac pN} \left(\vark +
 \frac1{\d_j\rho_j^N} \mu_+(Q_j)\right)
+\g \frac1{\d_j\rho_j^N} \mu_+(Q_j).
\end{split}
\end{equation}

\end{lemma}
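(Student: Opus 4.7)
The strategy is to refine the crude bound \eqref{int2} (which already controls the left-hand side by $2^{N+(1+\l)(p-1)}\vark$) into the sharper form featuring $\eps\vark+\g_1(\eps)\vark^{p/N}(\cdots)$ by combining a parabolic Sobolev--Gagliardo--Nirenberg embedding with the smallness of the measure of the superlevel set $L_j$ given by \eqref{int0}.

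First, I would set $w:=\psi((u-l_j)/\d_j)$ and recall from \eqref{est0} that $\tfrac{\d_j^{p-2}}{\rho_j^N}\iint_{L_j}|\n w|^p\xi_j^m\,dx\,dt\le\g\vark+\g(\d_j\rho_j^N)^{-1}\mu_+(Q_j)$. Applying the parabolic Gagliardo--Nirenberg inequality to $v:=w\,\xi_j^{m/p}$ (compactly supported in $B_j$ at each $t$), interpolating between $L^\infty(I_j;L^1(B_j))$ and $L^p(I_j;\W^{1,p}(B_j))$, yields $\iint v^{p(N+1)/N}\,dx\,dt\le C(\sup_t\int v\,dx)^{p/N}\iint|\n v|^p\,dx\,dt$. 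Using $\psi(s)\le s$ together with \eqref{int1} to bound $\sup_t\int v\,dx\le\g\vark\rho_j^N$, and \eqref{est0} plus standard cutoff estimates to bound the gradient factor, this produces an $L^{p(N+1)/N}$-control of $v$ of the order $(\vark\rho_j^N)^{p/N}(\vark+\mu_+(Q_j)/(\d_j\rho_j^N))\rho_j^N/\d_j^{p-2}$.

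Next, I would dominate $((u-l_j)/\d_j)^{(1+\l)(p-1)}$ pointwise by a combination of powers of $w$, exploiting the asymptotics $\psi(s)\asymp s\wedge s^{(p-1-\l)/p}$: on $\{s\le 1\}$ one has $s^{(1+\l)(p-1)}\asymp w^{(1+\l)(p-1)}$, while on $\{s\ge 1\}$, $s^{(1+\l)(p-1)}\asymp w^{\beta}$ with $\beta:=p(1+\l)(p-1)/(p-1-\l)$. H\"older's inequality on $L_j$, with exponents chosen so that the $L^{p(N+1)/N}$-bound from the previous step appears alongside a small-measure factor $|L_j|^\theta$, combined with $|L_j|\le 2^N\vark\rho_j^{N+p}/\d_j^{p-2}$ from \eqref{int0}, produces the desired factor $\vark^{p/N}$. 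Young's inequality with small parameter $\eps>0$ absorbs any leftover cross-term into $\eps\vark$, giving \eqref{eps-est1}. The companion estimate \eqref{eps-est2} follows from the energy inequality \eqref{main_est}, whose first right-hand-side integral is handled via $G(s)\le s$ and \eqref{int1} and whose second integral coincides with the left-hand side of \eqref{eps-est1} just proved.

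The main obstacle is the delicate balancing of the Sobolev and H\"older exponents: because $\psi$ has different power behavior near~$0$ and at infinity, the regions $\{s\le 1\}$ and $\{s>1\}$ must be treated separately and recombined so that the factor $\vark^{p/N}$ and the absorption into $\eps\vark$ emerge with constants independent of~$\d_j$, $\rho_j$ and of the target scaling built into the left-hand side.
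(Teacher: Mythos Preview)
Your overall strategy---combine a Sobolev-type embedding for $w=\psi\big((u-l_j)/\d_j\big)$ with the smallness of $L_j$---is the right one and close to the paper's, but two steps do not go through as written.

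For \eqref{eps-est2} you propose to control the first right-hand term of \eqref{main_est} by $G(s)\le s$ together with \eqref{int1}. That only yields
\[
\g\,\frac{\d_j^{p-2}}{\rho_j^{N+p}}\iint_{L_j}G\Big(\frac{u-l_j}{\d_j}\Big)\xi_j^{m-1}\,dx\,dt
\ \le\ \g\,\frac{\d_j^{p-2}}{\rho_j^{p}}|I_j|\cdot 2^{N+1}\vark
\ =\ \g\,2^{N+2}\eps_p\,\vark,
\]
i.e.\ $C\vark$ with a \emph{fixed} structural constant. Since the whole purpose of \eqref{eps-est2} is that the coefficient in front of $\vark$ can be taken arbitrarily small (this is what makes the absorption in the proof of Proposition~\ref{delta} work), this is a genuine gap. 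The paper fixes it by the pointwise splitting $G(s)\le 2^{-N-1}\eps+\hat\g(\eps)s^{(1+\l)(p-1)}$: the constant piece produces $\tfrac12\eps\vark$ via \eqref{int0}, and the power piece is merged into the second right-hand term of \eqref{main_est}, to which \eqref{eps-est1} then applies.

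For \eqref{eps-est1}, the parabolic Gagliardo--Nirenberg interpolation you invoke between $L^\infty_tL^1_x$ and $L^p_tW^{1,p}_x$ delivers only the exponent $p(N+1)/N$. On $\{s\ge1\}$ you need to control $w^\b$ with $\b=p(1+\l)(p-1)/(p-1-\l)$, and a short computation shows $\b>p(N+1)/N$ precisely when $\l>(p-1)/(Np+1)$; since $(p-1)/(Np+1)<\tfrac1N$, this happens for $\l$ near the upper end of its range, and then your H\"older step with a leftover small-measure factor is impossible. The paper avoids both the region splitting and this exponent obstruction by a single pointwise inequality $s^{(1+\l)(p-1)}\le 2^{-N}\eps+\g(\eps)\psi(s)^{q}$ with $q=p+\tfrac{\l p^2}{p-1-\l}$ (this is where the $\eps\vark$ in \eqref{eps-est1} comes from, via \eqref{int0}), followed at each time $t$ by a \emph{three}-factor H\"older against $|L_j(t)|$, $\int w_j^{p/(p-1-\l)}$ (bounded through \eqref{int1} since $\psi^{p/(p-1-\l)}(s)\le\g s$), and $\int(w_j\xi_j)^{pN/(N-p)}$, the last controlled by the \emph{spatial} Sobolev inequality. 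The exponents on the first two factors are $p(\tfrac1N-\l)$ and $\l p$, which sum to $\tfrac pN$ and yield exactly the factor $\vark^{p/N}$; here the hypothesis $\l\le\tfrac1N$ is used to keep the first exponent nonnegative. Your scheme can be repaired by interpolating with $L^\infty_tL^{p/(p-1-\l)}_x$ instead of $L^\infty_tL^1_x$, which raises the parabolic exponent to $p+\tfrac{p^2}{N(p-1-\l)}\ge\b$, but as stated it falls short.
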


\begin{proof}
For shortness we denote
\[w_j:=\psi\left(\frac{u-l_j}{\d_j}\right).\]
Note that, for every $\eps>0$, there exists $\g(\eps)>0$ such
that $s^{(1+\l)(p-1)}\le 2^{-N}\eps + \g(\eps)\psi^{p+\frac{\l
p^2}{p-1-\l}}(s)$. Hence, by \eqref{int0},
\begin{equation}\label{e3.10b}
\begin{split}
\frac{\d_j^{p-2}}{\rho_j^{N+p}}
\iint_{L_j}\left(\frac{u-l_j}{\d_j}\right)^{(1+\l)(p-1)}\xi_j^{m-p}dx\,dt\\
\le \eps\vark
+
\g(\eps)\frac{\d_j^{p-2}}{\rho_j^{N+p}}\iint_{L_j} w_j^{p+\frac{\l p^2}{p-1-\l}}\xi_j^{m-p}dx\,dt.
\end{split}
\end{equation}
The second term on the right hand side of \eqref{e3.10b} is
estimated by using the H\"older inequality first (note that
$\l\le\frac1N$), and then
the Sobolev inequality, as follows
\begin{equation}
\label{e3.11a}
\begin{split}
&\frac{\d_j^{p-2}}{\rho_j^{N+p}}  \iint_{L_j}  w_j^{p+\frac{\l p^2}{p-1-\l}}\xi_j^{m-p}dxdt\\
 &
\le \frac{\d_j^{p-2}}{\rho_j^{p}}
\int_{I_j}\left(\frac{|L_j(t)|}{\rho_j^N}\right)^{p(\frac 1N - \l)}\left(\frac1{\rho_j^N}\int_{L_j(t)}w_j^{\frac{ p}{p-1-\l}}dx\right)^{\l p}
\times\\
&\times\left(\frac1{\rho_j^N}\int_{L_j(t)}(w_j\xi_j)^{\frac{pN}{N-p}}\right)^{\frac{N-p}N}dt
\\
&
\le \g\,\left(\sup\limits_{t\in I_j}\frac{|L_j(t)|}{\rho_j^N} \right)^{p(\frac 1N - \l)}
\left(\sup\limits_{t\in I_j}\frac1{\rho_j^N}\int_{L_j(t)}w_j^{\frac{p}{p-1-\l}}dx\right)^{\l p}\times\\
&\times\left(\frac{\d_j^{p-2}}{\rho_j^{N}}\iint_{L_j}\left|\n \left(w_j \xi_j\right)\right|^p dx\,dt\right).
\end{split}
\end{equation}
Since $\psi(s)^{\frac{p}{p-1-\l}}\le \g s$ for $s\ge0$, the
first two factors in the right hand side of \eqref{e3.11a} are
estimated in \eqref{int0}-\eqref{int1} so that we obtain
\begin{equation*}
\begin{split}
\frac{\d_j^{p-2}}{\rho_j^{N+p}}\iint_{L_j} & \left(\frac{u-l_j}{\d_j}\right)^{(1+\l)(p-1)}\xi_j^{m-p}dx\,dt
\\
&\le \eps\vark + \g(\eps)\vark^\frac{p}N \,
\frac{\d_j^{p-2}}{\rho_j^{N}}\iint_{L_j}\left|\n \left(w_j
\xi_j\right)\right|^p dx\,dt\\
&\le \eps\vark +
\g(\eps)\vark^\frac{p}N \, \frac{\d_j^{p-2}}{\rho_j^N}\iint_{L_j}|\n w_j|^p \xi_j^p dx\,dt\\
+
&\g(\eps)\vark^\frac{p}N \, \frac{\d_j^{p-2}}{\rho_j^{N+p}}\iint_{L_j}w_j^p dx\,dt.
\end{split}
\end{equation*}
The second term on the right hand side of the last inequality is
estimated in \eqref{est0}. Then, the inequality $\psi^p(s)\le
\g(1+ s^{(1+\l)(p-1)})$ and \eqref{int0} and \eqref{int2} imply
that
\[
\frac{\d_j^{p-2}}{\rho_j^{N+p}}\iint_{L_j}w_j^p dx\,dt \le \g\vark.
\]
Hence \eqref{eps-est1} follows.

To conclude \eqref{eps-est2} from \eqref{main_est} and
\eqref{eps-est1}, we have to estimate the first term in the
right hand side of \eqref{main_est}. Note that, for every
$\eps>0$ there exists $\hat\g(\eps)>0$ such that $G(s)\le
2^{-N-1}\eps + \hat\g(\eps)s^{(1+\l)(p-1)}$. Then
\eqref{main_est} and \eqref{int0} imply that
\begin{equation*}
\begin{split}
&\sup\limits_{t\in I_j}\frac1{\rho_j^{N}}\int_{L_j(t)}  G\left(\frac{u(t)-l_j}{\d_j}\right)\xi_j^{m}dx
\\
\le & \tfrac12\eps\vark
+ (\g+\hat\g(\eps))
\frac{\d_j^{p-2}}{\rho_j^{N+p}}\iint_{L_j}\left(\frac{u-l_j}{\d_j}\right)^{(1+\l)(p-1)}\xi_j^{m-p}dx\,dt
+\g\frac1{\d_j\rho_j^N} \mu_+(Q_j),
\end{split}
\end{equation*}
with $\g>0$ as in \eqref{main_est}. Choose now $\eps_1>0$ such
that $\eps_1(\g+\hat\g(\eps))\le\frac12\eps$. Then
applying \eqref{eps-est1} with $\eps_1$ in place of $\eps$,
we obtain \eqref{eps-est2} with
$\g_2(\eps):=(\g+\hat\g(\eps))\g_1(\eps_1)$.
\end{proof}

\begin{proof}[Proof of Proposition \ref{delta}]
It suffices to prove \eqref{deltaj}-\eqref{delta0} in case
$\d_j>\hat\d_j.$ Otherwise the estimates are evident as
$\d_j=\hat\d_j$ implies that $\d_j=\frac12\d_{j-1}$ (recall
that $\frac12\d_{-1}=\d_{\rho,\theta}$) or
$\d_j=i_p(\tau_j)$. Note that $\d_j>\hat\d_j$
guarantees that $A_j(\d_j)=\varkappa$.

First we prove \eqref{deltaj}, that is, consider the case
$j=1,2,3,\ldots.$ Then it follows from Lemma \ref{delta} that,
for every $\eps>0$, there exists $\g(\eps)>0$ such that
\begin{equation}
\label{kappa}
\vark\le  \eps\vark + \g(\eps)\vark^{\frac pN}\left(\vark +
 \frac1{\d_j\rho_j^N} \mu_+(Q_j)\right) + \g\frac1{\d_j\rho_j^N} \mu_+(Q_j).
\end{equation}
Now choose $\eps=\frac12$ and $\vark$ such that
$\g(\frac12)\vark^{\frac pN}<\frac14$. Then it follows from
\eqref{kappa} that there exists $\g>0$ such that
\[
\frac1{\d_j\rho_j^N} \mu_+(Q_j)\ge \g\vark,\quad \mbox{hence}\quad \d_j\le \frac1{\g\vark}\frac1{\rho_j^N} \mu_+(Q_j).
\]
By \eqref{imb2}, $\mu_+(Q_j)\le
\mu_+(Q_{\rho_j,\tau_j\rho_j^p})$ so
\[
\d_j\le \tfrac12\d_{j-1} + i_p(\tau_j) +\g \rho_j^{-N}\mu_+(Q_{\rho_j,\tau_j\rho_j^p})
\le \tfrac12\d_{j-1} + \g D_p(\rho_j).
\]
So \eqref{deltaj} is shown.

Now we prove the estimate \eqref{delta0} of $\d_0$. Since
$A_0(\d_0)=\vark$, at least one of the following two
inequalities holds (recall that $l_0=0$):
\[
\tfrac12\vark\le \frac{\d_0^{p-2}}{\eps_p\rho^{N+p}}\iint_{Q_0}\left(\frac {u_+}{\d_0}\right)^{(1+\l)(p-1)}dx\,dt, \]
hence
\[
\d_0\le \left(\frac2{\vark\eps_p\rho^{N+p}}\iint_{Q_{\rho,\theta}}u_+^{(1+\l)(p-1)}dx\,dt\right)^{\frac1{1+\l(p-1)}},
\]
or
\begin{equation}\label{latter}
    \tfrac12\vark\le \sup\limits_{t\in I_0}\frac1{\rho^N}\int_{B_\rho}G\left(\frac {u_+}{\d_0}\right)\xi_0^mdx.
\end{equation}
In the former case \eqref{delta0} follows immediately, while in
the latter one we use \eqref{main_est} and the next estimate:
for every $\eps>0$ there exists $\g(\eps)>0$ such that $G(s)\le
\eps + \g(\eps)s^{(1+\l)(p-1)}$. Then \eqref{latter} implies
that, there exists $\g>0$ and, for every $\eps>0$ there exists
$\g(\eps)>0$ such that
\[
\tfrac12\vark\le  \g\eps
+ \g(\eps)\frac{\d_0^{p-2}}{\rho^{N+p}}\iint_{Q_0}\left(\frac {u_+}{\d_0}\right)^{(1+\l)(p-1)}dx\,dt
 + \g \frac1{\d_0\rho^N} \mu_+(Q_0).
\]
Choose $\eps>0$ such that $\g\eps\le \frac14\vark$. Then, for
some (other) $\g>0$,
\[
  \g\vark \le \frac{\d_0^{p-2}}{\rho^{N+p}}\iint_{Q_0}\left(\frac {u_+}{\d_0}\right)^{(1+\l)(p-1)}dx\,dt
    +  \frac1{\d_0\rho^N} \mu_+(Q_0).
\]
Thus at least one of the following two inequalities holds:
\[
\tfrac12 \g\vark \le \frac{\d_0^{p-2}}{\rho^{N+p}}\iint_{Q_0}\left(\frac {u_+}{\d_0}\right)^{(1+\l)(p-1)}dx\,dt,
\]
hence
\[
\quad \d_0\le \left(\frac2{\g\vark\rho^{N+p}}\iint_{Q_{\rho,\theta}}u_+^{(1+\l)(p-1)}dx\,dt\right)^{\frac1{1+\l(p-1)}},
\]
or
\[
\tfrac12 \g\vark\le \frac1{\d_0\rho^N} \mu_+(Q_0),
\quad\mbox{hence}\quad
\d_0\le \frac2{\g\vark} \frac1{\rho^N} \mu_+(Q_0).
\]
Note that $\mu_+(Q_0)\le \mu_+(Q_{\rho,\tau_0\rho^p})$, due to
\eqref{imb2}. Hence
\[
\begin{split}
\d_0\le & \d_{\rho,\theta} +
\g \left(\frac1{\rho^{N+p}}\iint_{Q_{\rho,\theta}}u_+^{(1+\l)(p-1)}dx\,dt\right)^{\frac1{1+\l(p-1)}}
+ i_p(\tau_0) + \g\mu_+(Q_{\rho,\tau_0\rho^p})
\\
\le & \d_{\rho,\theta} +
\g \left(\frac1{\rho^{N+p}}\iint_{Q(\rho)}u_+^{(1+\l)(p-1)}dx\,dt\right)^{\frac1{1+\l(p-1)}}
+\g D_p(\rho).
\end{split}
\]
So \eqref{delta0} holds.
\end{proof}

\begin{corollary}\label{l_infty}
The sequence $(l_j)$ is bounded above and
\[
l_j\nearrow l_\infty\le 2\d_{\rho,\theta} +
\g\left\{\left(\frac1{\rho^{N+p}}\iint_{Q_{\rho,\theta}} u_+^{(1+\l)(p-1)}dxdt\right)^\frac{1}{1+\l(p-1)}
+ P^{\mu_+}_p(y,s;\rho) 
\right\}.
\]
\end{corollary}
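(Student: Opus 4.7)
Since by construction $l_{j+1}=l_j+\d_j$ and $l_0=0$, one has $l_j=\sum_{k=0}^{j-1}\d_k$, so the sequence $(l_j)$ is monotone and the claim will follow once we show that $\sum_{k=0}^{\infty}\d_k$ is finite and is bounded by the right hand side of the displayed estimate. The plan is therefore to iterate the one-step estimate \eqref{deltaj} from Proposition \ref{delta}, sum over $j$, and combine with the explicit bound \eqref{delta0} on $\d_0$.

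First I would unroll the recurrence $\d_j\le\tfrac12\d_{j-1}+\g D_p(\rho_j)$ (valid for $j\ge1$) by induction to obtain
\[
\d_j\le 2^{-j}\d_0 + \g\sum_{k=1}^{j}2^{k-j}D_p(\rho_k),\qquad j\ge1.
\]
Summing over $j\ge0$ and interchanging the order of summation in the double sum gives
\[
\sum_{j=0}^{\infty}\d_j\le 2\d_0+\g\sum_{k=1}^{\infty}D_p(\rho_k)\sum_{j=k}^{\infty}2^{k-j}
= 2\d_0+2\g\sum_{k=1}^{\infty}D_p(\rho_k)\le 2\d_0+2\g P_p^{\mu_+}(y,s;\rho),
\]
where in the last step I used the definition \eqref{pWolff} of the parabolic Wolff potential.

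To finish, I would insert the bound \eqref{delta0} for $\d_0$,
\[
\d_0\le \d_{\rho,\theta}+\g\Bigl(\tfrac{1}{\rho^{N+p}}\iint_{Q_{\rho,\theta}}u_+^{(1+\l)(p-1)}\,dx\,dt\Bigr)^{\frac1{1+\l(p-1)}}+\g D_p(\rho),
\]
and absorb the term $\g D_p(\rho)$ into $P_p^{\mu_+}(y,s;\rho)$, since $D_p(\rho)$ is the $j=0$ term of that series. This yields exactly the displayed bound for $l_\infty$ (with the constant in front of $\d_{\rho,\theta}$ being $2$, matching the statement). There is no real obstacle here beyond carefully tracking the geometric-series constants; the substantive work has already been carried out in Proposition \ref{delta}.
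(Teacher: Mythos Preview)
Your proof is correct and follows essentially the same strategy as the paper: both arguments feed the one-step recurrence \eqref{deltaj} and the initial bound \eqref{delta0} from Proposition~\ref{delta} into a geometric-series summation. The only cosmetic difference is that the paper sums the recurrence once to obtain $l_J\le \tfrac12 l_{J-1}+\d_{\rho,\theta}+\g(\cdots)$ and then uses the monotonicity $l_J>l_{J-1}$ to self-absorb, whereas you fully unroll the recursion and interchange the order of summation; both routes give the same bound with the same constants.
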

\begin{proof}
It follows from Proposition \ref{delta} and setting $l_0=0$
that there exists $\g>0$ such that, for $J=2,3,4,\ldots,$
\begin{equation*}
    \begin{split}
l_J& =\sum\limits_{j=0}^{J-1}\d_j \le  \d_0 + \tfrac12\sum\limits_{j=0}^{J-2}\d_j
 + \g\sum\limits_{j=1}^{J-1}D_p(\rho_j)
\\
\le & \tfrac12 l_{J-1} + \d_{\rho,\theta}
+ \g\left(\frac1{\rho^{N+p}}\iint_{Q_{\rho,\theta}} u_+^{(1+\l)(p-1)}dxdt\right)^\frac{1}{1+\l(p-1)}
+ \g\sum\limits_{j=0}^{J-1}D_p(\rho_j).
    \end{split}
\end{equation*}
Since $l_J>l_{J-1}$, the assertion follows.

\end{proof}

\begin{proof}[Proof of Theorem \ref{mainthrm}]
Since $\tilde u:=-u$ satisfies the equation $\partial_t \tilde
u - \Delta_p\tilde u = - \mu$, it suffices to show that
$u_+(y,s)\le l_\infty$ whenever $l_\infty<\infty$ and $(y,s)$
is a Lebesgue point for the function $u_+$.

Note that, by \eqref{imb1}, $Q_j\downarrow \{(y,s)\}$ as
$j\to\infty$. Observe that comparable symmetric cylinders form
a basis satisfying the Besicovitch property, by \cite[Lemma
1.6]{G70} (see also \cite[Chap.~I,\,Sec.1,\,Remark (5)]{G75}).
Hence, by \cite[Theorem 2.4]{G70} (see also
\cite[Chap.~II,\,Sec.~2,\, Theorem 2.1]{G75}), it is a
differentiable basis for all functions from $L^1_{loc}(\O_T)$.
So for a Lebesgue point $(y,s)$ for $u_+$, one has
\begin{equation*}
\begin{split}
u_+(y,s)= & \lim\limits_{j\to\infty}\frac1{|Q_j|}\iint_{Q_j}u_+dx\,dt
\le l_\infty + \limsup\limits_{j\to\infty}\frac1{|Q_j|}\iint_{Q_j}(u-l_\infty)_+dx\,dt
\\
\le & l_\infty +
\left(\limsup\limits_{j\to\infty}\frac1{|Q_j|}\iint_{Q_j}(u-l_\infty)_+^{(1+\l)(p-1)}dx\,dt\right)^{\frac1{(1+\l)(p-1)}}.
\end{split}
\end{equation*}
On the other hand
\begin{equation*}
\begin{split}
\frac1{|Q_j|}\iint_{Q_j}(u-l_\infty)_+^{(1+\l)(p-1)}dx\,dt<
\g\frac{\d_j^{p-2}}{\rho_j^{N+p}}\iint_{L_j}(u-l_j)^{(1+\l)(p-1)}dx\,dt\\
\le \g\vark \d_j^{(1+\l)(p-1)}\to0 \mbox{ as }j\to\infty,
\end{split}
\end{equation*}
since the series $\sum\d_j<\infty$. Hence the assertion
follows.
\end{proof}

%
%


\begin{small}

\end{small}


\end{document}